\newtheorem{theorem}{Theorem}[section]
\newtheorem{lemma}[theorem]{Lemma}
\newtheorem{definition}[theorem]{Definition}
\newtheorem{example}[theorem]{Example}
\newtheorem{remark}[theorem]{Remark}
\newcommand{\rank}{\textup{rank}}
\begin{document}

 \title{On the distance from a matrix polynomial to matrix
 polynomials with $k$ prescribed distinct eigenvalues}

\vspace{-6mm}

 \author{E. Kokabifar\thanks{Department of Mathematics,
 Faculty of Science, Yazd University, Yazd, Iran
 (e.kokabifar@stu.yazd.ac.ir, loghmani@yazd.ac.ir).},\,
 G.B. Loghmani\footnotemark[1],\,\,
 P.J. Psarrakos\thanks{Department of Mathematics,
 National Technical University of Athens, Zografou Campus,
 15780 Athens, Greece (ppsarr@math.ntua.gr).}\,\,
 and S.M. Karbassi\thanks{Department of Mathematics,
 Yazd Branch, Islamic Azad University, Yazd, Iran
 \,(mehdikarbassi@gmail.com).}}

\maketitle

\vspace{-11mm}

\begin{abstract}
Consider an $n\times n$ matrix polynomial $P(\lambda)$ and a set
$\Sigma$ consisting of $k \le n$ distinct complex numbers. In this
paper, a (weighted) spectral norm distance from $P(\lambda)$ to
the matrix polynomials whose spectra include the specified set
$\Sigma$, is defined and studied. An upper and a lower bounds for
this distance are obtained, and an optimal perturbation of
$P(\lambda)$ associated to the upper bound is constructed.
Numerical examples are given to illustrate the efficiency of the
proposed bounds.
\end{abstract}

\vspace{-2mm}

 {\small
 {\emph{Keywords:}}  Matrix polynomial,
                     Eigenvalue,
                     Perturbation,
                     Singular value.

 {\emph{AMS Classification:}}  15A18,
                               65F35.
 }

\vspace{-4mm}

\section{Introduction} \label{intro}

\vspace{-3mm}

Let $A$ be an $n\times n$ complex matrix and  let $\mathcal{M}$ be
the set of all $n\times n$ complex matrices that have $\mu \in
\mathbb{C}$ as a multiple eigenvalue. Malyshev \cite{malyshev}
obtained the following singular value optimization
characterization for the spectral norm distance from $A$ to
$\mathcal{M}$:
\begin{equation*}
\mathop {\min }\limits_{B \in \mathcal{M}} {\left\| A - B
\right\|_2} = \mathop {\max }\limits_{\gamma  \ge 0} {s_{2n -
1}}\left( {\left[ {\begin{array}{*{20}{c}}
{A - \mu I}&{\gamma {I_n}}\\
0&{A - \mu I}
\end{array}} \right]} \right),
\end{equation*}
where $\|\cdot\|_2$ denotes the spectral matrix norm subordinate
to the euclidean vector norm, and $s_i$ is the $i$th singular
value of the corresponding matrix ordered in a nonincreasing
order. Malyshev's work can be considered as a solution to
Wilkinson's problem, that is, the computation of the distance from
a matrix $A \in \mathbb{C}^{n \times n}$ with all its eigenvalues
simple to the $n\times n$ matrices that have multiple eigenvalues.
This distance was introduced by Wilkinson in \cite{wilkinson}, and
some bounds for it were computed by Ruhe \cite{ruhe}, Wilkinson
\cite{wil2,wil3,wil4,wil1} and Demmel \cite{demmel1}. A spectral
norm distance from $A$ to matrices that have a prescribed
eigenvalue of algebraic multiplicity $3$, or any prescribed
algebraic multiplicity, were obtained by Ikramov and Nazri
\cite{ikramov} and Mengi \cite{mengi}, respectively. Moreover,
Lippert \cite{lipert} and Gracia \cite{gracia} studied a spectral
norm distance from $A$ to the matrices with two prescribed
eigenvalues, and obtained a nearest matrix to $A$ having these two
eigenvalues.

In 2008, Papathanasiou and Psarrakos \cite{papa} generalized
Malyshev's results for the case of matrix polynomials, introducing
a (weighted) spectral norm distance from an $n \times n$ matrix
polynomial $P(\lambda)$ to the matrix polynomials that have a
prescribed $\mu\in \mathbb{C}$ as a multiple eigenvalue, and
obtaining an upper and a lower bounds for this distance. Lately,
motivated by Mengi's results in \cite{mengi}, Psarrakos
\cite{psarrakos} introduced the matrix polynomials
\[
  F_k \left[ {P(\lambda );\gamma } \right] =
  \left[ {\begin{array}{*{20}{c}} {P(\lambda )}&0& \cdots &0\\
  {\gamma {P^{(1)}}(\lambda )}&{P(\lambda )}& \cdots &0\\
  {\frac{{{\gamma ^2}}}{{2!}}{P^{(2)}}(\lambda )}&
  {\gamma {P^{(1)}}(\lambda )}& \cdots & 0 \\
  \vdots & \vdots & \ddots  & \vdots \\
  {\frac{{{\gamma ^{k - 1}}}}{{(k - 1)!}}{P^{(k - 1)}}(\lambda )}&
  {\frac{{{\gamma ^{k - 2}}}}{{(k - 2)!}}{P^{(k - 2)}}(\lambda )}&
  \cdots & {P(\lambda )} \end{array}} \right ] ,
  \;\;\; k=1,2,\dots,
\]
where $P^{(i)}(\lambda)$ denotes the $i$th derivative of
$P(\lambda)$ with respect to $\lambda$. Then, he derived lower and
upper bounds for a distance from $P(\lambda)$ to the matrix
polynomials with a prescribed eigenvalue of a desired algebraic
multiplicity, by generalizing the methodology used in \cite{papa}.
Recently, Kokabifar, Loghmani, Nazari and Karbassi \cite{klnk}
extended the results of \cite{papa} to the case of two distinct
eigenvalues, by replacing the first order derivative of
$P(\lambda)$ in $F_2 \left[ {P(\lambda );\gamma } \right]$ by a
divided difference. Also, Karow and Mengi \cite{karme} studied
systematically an alternative distance from a given $n \times n$
matrix polynomial to matrix polynomials with a specified number of
eigenvalues at specified locations in the complex plane, deriving
singular value optimization characterizations based on a
Sylvester's equation characterization.

In this paper, motivated by the above spectrum updating problems,
we introduce and study a (weighted) spectral norm distance from an
$n \times n$ matrix polynomial $P(\lambda)$ to the set of all
matrix polynomials with $k\le n$ prescribed distinct eigenvalues.
In particular, we obtain an upper and a lower bounds for this
distance, and construct an optimal perturbation associated to the
upper bound. Replacing the derivatives of $P(\lambda)$ in
${F_k}\left[ {P(\lambda );\gamma } \right]$ by divided differences
formulas, extending necessary definitions and lemmas of
\cite{klnk,lipertk,papa,psarrakos}, and constructing an
appropriate perturbation of $P(\lambda)$ are the main ideas used
herein. (Hence, this article can be considered as a generalization
of the results obtained in \cite{lipertk} to the case of matrix
polynomials, and also as an extension of
\cite{klnk,papa,psarrakos} to the case of $k$ arbitrary distinct
eigenvalues). In the next section, we review standard definitions
on matrix polynomials, and we also introduce some definitions
which are necessary for the remainder. In Section
\ref{perturbation}, we construct an admissible perturbation of
$P(\lambda)$ by extending the methods described in
\cite{klnk,papa, psarrakos}. In Section \ref{bounds}, we obtain
our bounds, and in Section \ref{example}, we give two numerical
examples to illustrate the effectiveness of the proposed
technique.

\section{Preliminaries}  \label{prel}

In the last decades, the study of matrix polynomials, especially
with regard to their spectral analysis, has received much
attention of several researchers and has met many applications.
Some basic references for the theory and applications of matrix
polynomials are \cite{glancaster,kacz,lanc,markus,time} and
references therein.

For $A_j\in\mathbb{C}^{n \times n}$ $(j = 0,1,\dots,m)$ and a
complex variable $\lambda$, we define the \textit{matrix
polynomial}
\begin{equation}  \label{plambda}
  P(\lambda) = A_m \lambda^m  + A_{m - 1} \lambda^{m - 1}  + \cdots + A_1 \lambda  + A_0
             = \sum\limits_{j = 0}^m A_j \lambda ^j  .
\end{equation}
If for a scalar $\mu \in \mathbb{C}$ and some nonzero vector
$\upsilon \in {\mathbb{C}^{n}}$, it holds that $P(\mu)\upsilon =
0$, then the scalar $\mu$ is called an \textit{eigenvalue} of
$P(\lambda)$ and the vector $\upsilon$ is known as a
\textit{(right) eigenvector} of $P(\lambda)$ corresponding to
$\mu$. Similarly, a nonzero vector $\nu  \in {\mathbb{C}^{n}}$ is
known as a \textit{(left) eigenvector} of $P(\lambda)$
corresponding to $\mu$ when $\nu^*P(\mu)=0$. The \textit{spectrum}
of $P(\lambda)$, denoted by $\sigma(P)$, is the set of its
eigenvalues. Throughout of this paper, it is assumed that the
coefficient matrix $A_m$ is \textit{nonsingular}; this implies
that the spectrum of $P(\lambda)$ contains no more than $mn$
distinct elements.

The multiplicity of an eigenvalue $\lambda_0 \in \sigma(P)$ as a
root of the scalar polynomial $\det P(\lambda)$ is called the
\textit{algebraic multiplicity} of $\lambda_0$, and the dimension
of the null space of the (constant) matrix $P(\lambda_0)$ is known
as the \textit{geometric multiplicity} of $\lambda_0$. The
algebraic multiplicity of an eigenvalue is always greater than or
equal to its geometric multiplicity. An eigenvalue is called
\textit{semisimple} if its algebraic and geometric multiplicities
are equal; otherwise, it is known as \textit{defective}. The
singular values of $P(\lambda)$ are the nonnegative roots of the
eigenvalue functions of $P(\lambda)^*P(\lambda)$, and they are
denoted by ${s_1}\left( {P\left( \lambda \right)} \right) \ge
{s_2}\left( {P\left( \lambda \right)} \right) \ge \cdots  \ge
{s_n}\left( {P\left( \lambda \right)} \right)$ (i.e., they are
considered in a nondecreasing order).

\begin{definition} \textup{
Let $P(\lambda )$ be a matrix polynomial as in (\ref{plambda}) and
let $\Delta _j  \in \mathbb{C}^{n \times n}$ $(j = 0,1,\dots,m)$
be arbitrary matrices. Consider perturbations of the matrix
polynomial $P(\lambda)$ of the form
\begin{equation}  \label{gQ}
 Q(\lambda ) =  P(\lambda ) + \Delta (\lambda )
             =  \sum\limits_{j = 0}^m {(A_j  + \Delta _j )\lambda ^j }.
\end{equation}
Also, for $\varepsilon > 0$ and a set of given nonnegative weights
$w = \{ w_0 , w_1 , \dots , w_m \}$, with $w_0 > 0$, define the
class of admissible perturbed matrix polynomials
\[
 \mathcal{B}(P,\varepsilon,w) = \left \{ Q(\lambda) \;\, \mbox{as
 in (\ref{gQ})}: \left\| \Delta_j  \right\|_2 \le \varepsilon w_j,\; j
 = 0 , 1 , \dots , m  \right \},
\]
and the scalar polynomial $w(\lambda ) = w_m \lambda^m + w_{m-1}
\lambda^{m-1} + \cdots + w_1 \lambda  + w_0$.
      }
\end{definition}

\begin{definition} \label{dis}  \textup{
Let $P(\lambda )$ be a matrix polynomial as in (\ref{plambda}),
and let a set of distinct complex numbers $\Sigma  = \{ \mu _1
,\mu _2 , \ldots ,\mu _k \}$ $(k\le n)$ be given. The distance
from $P(\lambda)$ to the set of matrix polynomials whose spectra
include $\Sigma$ is defined and denoted by
\begin{equation*}
 D_w(P,\Sigma ) = \min \left\{ \varepsilon  \ge 0 :
 \exists\, Q(\lambda ) \in {\mathcal{B}}(P,\varepsilon ,w)\;
 \mbox{such that}\; \Sigma \subseteq \sigma(Q) \right\}.
\end{equation*}
     }
\end{definition}

\begin{definition} \label{uv}  \textup{
Consider a complex function $f$ and $k$ distinct scalars
$\mu_1,\mu_2,\dots,\mu_k \in \mathbb{C}$. The \textit{divided
difference relative to $\mu_i$ and $\mu_{i+t}$} $(1 \le i \le k-1,
\; 1 \le t \le k-i)$ is denoted by $f\left[ {{\mu_i},{\mu_{i+1}},
\ldots ,{\mu_{i + t}}} \right]$ and is defined by the following
recursive formula \cite{burben}:
\[
 f\left[ \mu_i , \mu_{i + 1} , \ldots , \mu_{i + t}  \right] =
 \frac{f \left[ \mu_i , \mu_{i+1}, \dots , \mu_{i+t-1} \right]
 - f \left[ \mu_{i + 1}, \mu_{i + 2} , \dots , \mu_{i+t} \right]}
 { \mu_i - \mu_{i+t} } \, ,
\]
where $f\left[ \mu_i \right] = f\left( \mu_i \right)$
$\,(i=1,2,\dots,k)$.
     }
\end{definition}

\begin{definition}  \label{Fg}  \textup{
Suppose that $P(\lambda )$ is a matrix polynomial as in
(\ref{plambda}) and a set of distinct complex numbers $\Sigma  =
\{ \mu _1 ,\mu _2 , \hdots ,\mu _k \}$ $(k\le n)$ is given. For
any scalar $\gamma \in \mathbb{C}$, define the $nk \times nk$
matrix
\[
 {{F_\gamma }\left[ {P,\Sigma } \right]} =
 {\left[ {\begin{array}{*{20}{c}} {P({\mu _1})}& 0 & \cdots & 0 \\
 {\gamma P[{\mu _1},{\mu _2}]} & {P({\mu _2})} & \cdots & 0 \\
 {{\gamma ^2}P[{\mu _1},{\mu _2},{\mu _3}]} & {\gamma P[{\mu _2},{\mu _3}]} & \cdots & 0 \\
 \vdots & \vdots & \ddots & \vdots  \\
 {{\gamma ^{k - 1}}P[{\mu _1}, \ldots ,{\mu _k}]}&{{\gamma ^{k - 2}}P[{\mu _2},
 \ldots ,{\mu _k}]}& \cdots & {P({\mu _k})}
\end{array}} \right]}  .
\]
     }
\end{definition}

\section{Construction of a perturbation}\label{perturbation}

In this section, we construct an $n\times n$ matrix polynomial
$\Delta_{\gamma}(\lambda)$ such that the given set of distinct
scalars $\Sigma = \{ \mu_1,\mu_2,\dots,\mu_k \}$ $(k\le n)$ is
included in the spectrum of the perturbed matrix polynomial
$Q_{\gamma}(\lambda) = P(\lambda) + \Delta (\lambda)$. Without
loss of generality, hereafter we can assume that the parameter
$\gamma$ is real nonnegative \cite{psarrakos}. Moreover, for
convenience, we set $\rho = n k - k + 1$.

\begin{definition}\label{UV}  \textup{
Suppose that
\begin{equation*}
 u(\gamma)=\left[ {\begin{array}{*{20}{c}}
 u_1 (\gamma) \\ u_2 (\gamma) \\ \vdots \\ u_k (\gamma)
 \end{array}} \right], \, v(\gamma) = \left[ {\begin{array}{*{20}{c}}
 v_1 (\gamma) \\ v_2 (\gamma) \\ \vdots \\ v_k (\gamma)
 \end{array}} \right] \in {\mathbb{C}^{nk}} \;\;\;
 \left( u_j(\gamma), v_j(\gamma) \in \mathbb{C}^n ,\;
 j = 1,2,\dots , k \right )
\end{equation*}
is a pair of left and right singular vectors of ${s_\rho }\left(
{{F_\gamma }\left[ {P,\Sigma } \right]} \right)$, respectively.
Define the $n \times k$ matrices
\begin{equation*}
 U(\gamma) = \left [ \, u_1 (\gamma) \; u_2 (\gamma) \; \cdots \; u_k (\gamma) \, \right ]
 \;\;\; \mbox{and} \;\;\;
 V(\gamma) = \left [ \, v_1 (\gamma) \; v_2 (\gamma) \; \cdots \; v_k (\gamma) \, \right ].
\end{equation*}
    }
\end{definition}

Suppose now that $\gamma>0$ and $\rank (V(\gamma))=k$. Define the
quantities
\begin{equation}  \label{quant}
  \theta _{i,j} = \frac{\gamma }{{{\mu _i} - {\mu _j}}}, \;\;\;
  i , j \in \{ 1 , 2 , \dots , k \} , \,\; i \ne j ,
\end{equation}
and the vectors
\[
  {\hat v}_1(\gamma) = v_1(\gamma), \;\;\;
  {\hat v}_p(\gamma) = v_p(\gamma) + \sum\limits_{i=1}^{p-1}
  \left[ {{( - 1)}^i} \left( \prod\limits_{j=p-i}^{p-1} \theta_{j,p} \right) v_{p-i}(\gamma) \right]
  \;\; ( p = 2, 3, \dots , k )
\]
and
\[
  {\hat u}_1(\gamma) = u_1(\gamma), \;\;\;
  {\hat u}_p(\gamma) = u_p(\gamma) + \sum\limits_{i=1}^{p-1}
  \left[ {{( - 1)}^i} \left( \prod\limits_{j=p-i}^{p-1} \theta_{j,p} \right) u_{p-i}(\gamma) \right]
  \;\; ( p = 2, 3, \dots , k ) .
\]

Analogously to Definition \ref{UV}, we define the $n \times k$
matrices
\[
  {\hat U} (\gamma) = \left [ \,{\hat u}_1 (\gamma) \; {\hat u}_2(\gamma) \; \cdots \;
  {\hat u}_k(\gamma) \, \right ] \;\;\; \mbox{and} \;\;\;
  {\hat V} (\gamma) = \left [ \,{\hat v}_1 (\gamma) \; {\hat v}_1(\gamma) \; \cdots \;
  {\hat v}_k(\gamma) \, \right ] .
\]
We also consider the quantities
\begin{equation} \label{betas}
  \alpha _{i,s} = \frac{1}{{w\left( {\left| {{\mu _i}} \right|} \right)}}
  \sum\limits_{j = 0}^m {\left( {{{\left( {\frac{{{{\bar \mu }_i}}}{{\left| {{\mu _i}}
  \right|}}} \right)}^j}\mu _s^j{w_j}} \right)} \;\;\; \mbox{and} \;\;\;
  \beta_s = \frac{1}{k}\sum\limits_{i = 1}^k {{\alpha _{i,s}}}
  , \;\;\; i,s = 1, 2, \dots , k,
\end{equation}
where $w_0 > 0$ and, by convention, we set $\alpha_{i,s} = 1$
whenever $\mu_i=0$. If $\beta_1 , \beta_2 , \dots , \beta_k$ are
nonzero, then we define the $n \times n$ matrix
\[
  \Delta_\gamma  =  - {s_\rho} ( F_{\gamma} [ P , \Sigma ] ) {\hat U}(\gamma)
  \,\textup{diag} \left\{ \frac{1}{\beta_1} , \frac{1}{\beta_2} , \dots , \frac{1}{\beta_k} \right \}
  {\hat V} (\gamma)^\dag  ,
\]
where $\hat V{(\gamma )^\dag }$ denotes the \textit{Moore-Penrose
pseudoinverse} of $\hat V{(\gamma )}$, and the $n \times n$ matrix
polynomial
\[
 \Delta_{\gamma} \left( \lambda \right)
 = \sum\limits_{j = 0}^m  \Delta_{\gamma ,j}{\lambda^j} ,
\]
where
\begin{equation} \label{deltagammaj}
 {\Delta _{\gamma ,j}} = \frac{1}{k} \sum\limits_{i = 1}^k {\left(
 {\frac{1}{{w\left( {\left| {{\mu _i}} \right|} \right)}}{{\left(
 {\frac{{{{\bar \mu }_i}}}{{\left| {{\mu _i}} \right|}}}
 \right)}^j}{w_j}} \right)\Delta _\gamma,
 \;\;\; j = 0, 1, \dots , m }  .
\end{equation}

By straightforward computations, we verify that the matrix
polynomial $\Delta_{\gamma} \left( \lambda  \right)$ satisfies
\[
    \Delta_{\gamma} \left( {\mu_s} \right)
    \,=\, \sum \limits_{j=0}^m {\left [ {\frac{1}{k} \sum\limits_{i=1}^k {
      \left( {\frac{1}{w \left( \left| \mu_i \right| \right)}
      \left({\frac{{\bar\mu_i }}{{\left|{\mu_i}\right|}}}\right)^j
         } \right)w_j \mu _s ^j } } \right ]\Delta _\gamma }
    \,=\, {\beta_s}{\Delta_\gamma }, \;\;\; s = 1, 2, \dots , k  .
\]
Notice that the condition $\rank ( V(\gamma) ) = k $ implies
${{\hat v}_i(\gamma )}\neq 0$, $(i = 1, 2, \dots ,k)$ and $\hat
V{(\gamma )^\dag }\hat V{(\gamma )} = I_k$, where $I_k$ denotes
the $k \times k$ identity matrix.

Moreover, since $u(\gamma), v(\gamma)$ is a pair of left and right
singular vectors of ${s_\rho }\left( {F_{\gamma}[ P,\Sigma]}
\right)$, we have
\[
 {F_{\gamma} [P,\Sigma]} v(\gamma) =
 {s_\rho } \left({F_{\gamma}[P,\Sigma]} \right) u(\gamma) ,
\]
or equivalently, the following hold:
\begin{eqnarray*}
  {s_\rho} \left({F_{\gamma}[P,\Sigma]} \right) u_1(\gamma)
  &=& P(\mu_1)   v_1(\gamma)   ,  \\
  {s_\rho} \left({F_{\gamma}[P,\Sigma]} \right) u_2(\gamma)
  &=& {\gamma} P[\mu_1,\mu_2] v_1(\gamma) + P(\mu_2) v_2(\gamma) , \\
  \vdots  \;\;\;\;\;\;\;\;\;\;\;\;
  & & \;\;\;\;\;\;\;\;\;\;\; \vdots \;\;\;\;\;\;\;\;\;\;\;\;\;\;\;\;\;\;\;\;\;\;\; \vdots \\
  {s_\rho} \left({F_{\gamma}[P,\Sigma]} \right) u_k(\gamma)
  &=&  {\gamma^{k-1}} P[ {\mu_1}, \ldots,{\mu_k}]}v_1(\gamma) +
  {\gamma^{k-2} P[{\mu_2}, \ldots,{\mu_k}] v_2(\gamma) +
  \cdots + P({\mu_k}) v_k(\gamma) .
\end{eqnarray*}
Substituting $\,{\hat u}_1 (\gamma), {\hat u}_2(\gamma), \dots ,
{\hat u}_k(\gamma)\,$ and $\,{\hat v}_1 (\gamma), {\hat
v}_2(\gamma), \dots , {\hat v}_k(\gamma)\,$ into these equations
yields
\[
  {s_\rho }\left( {{F_\gamma} [P,\Sigma]} \right){{\hat u}_i}(\gamma )
  =  P\left( {{\mu _i}} \right){{\hat v}_i}(\gamma )  ,
  \;\;\; i = 1, 2, \ldots , k .
\]

Therefore, for the matrix polynomial
\begin{equation}  \label{Q}
 Q_{\gamma}(\lambda ) = P(\lambda ) + \Delta_{\gamma} (\lambda ) =
 \sum\limits_{j = 0}^m \left( {{A_j} + {\Delta _{\gamma ,j}}}
 \right){\lambda ^j}
\end{equation}
(recall the coefficient perturbations ${\Delta _{\gamma ,j}}$ in
(\ref{deltagammaj})), and for every $i = 1, 2, \ldots , k$, it
follows
\begin{eqnarray*}
 Q_{\gamma}\left( {{\mu _i}} \right){{\hat v}_i}(\gamma ) &=&
 P\left( {{\mu _i}} \right){{\hat v}_i}(\gamma ) + \Delta_{\gamma}
 \left( {{\mu _i}} \right){{\hat v}_i}(\gamma ) \\ &=&
 {s_\rho }\left( {{F_\gamma} [P,\Sigma]}
 \right){{\hat u}_i}(\gamma ) +
 {\beta _i}{\Delta_\gamma}{{\hat v}_i}(\gamma ) \\ &=&
 {s_\rho }\left( {{F_\gamma} [ P,\Sigma ]}
 \right){{\hat u}_i}(\gamma) + {\beta _i}\left( { - {s_\rho }
 \left( {{F_\gamma} [ P,\Sigma ]} \right)
 \frac{1}{{{\beta _i}}}} \right){{\hat u}_i}(\gamma ) \\ &=& 0 .
\end{eqnarray*}
As a consequence, if rank$(V(\gamma))=k$ (recall that all $\beta_1
, \beta_2 , \dots , \beta_k$ in (\ref{betas}) are nonzero), then
$\mu_1, \mu_2,\hdots, \mu_k$ are eigenvalues of the matrix
polynomial $Q_{\gamma}(\lambda)$ in (\ref{Q}) with ${{\hat
v}_1}(\gamma ), {{\hat v}_2}(\gamma ),\hdots,{{\hat v}_k}(\gamma
)$ as their associated eigenvectors, respectively.

The next result follows immediately.

\begin{theorem}
Consider a matrix polynomial $P(\lambda)$ as in (\ref{plambda})
and a given set of $k \le n$ distinct complex numbers
$\Sigma=\{\mu_1, \mu_2,\hdots, \mu_k\}$, and suppose that the
quantities $\beta_1 , \beta_2 , \dots , \beta_k$ in (\ref{betas})
are nonzero. For every $\gamma>0$ such that $\rank(V(\gamma))=k$,
the scalars $\mu_1, \mu_2,\hdots, \mu_k$ are eigenvalues of the
matrix polynomial $Q_{\gamma}(\lambda)$ in (\ref{Q}), with
corresponding eigenvectors ${{\hat v}_1}(\gamma ), {{\hat
v}_2}(\gamma ),\hdots,{{\hat v}_k}(\gamma )$, respectively.
\end{theorem}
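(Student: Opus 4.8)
The plan is to observe that the statement is an immediate consequence of the chain of identities assembled in the construction preceding it, so the proof reduces to organizing those computations and checking that the hypotheses make every object well defined. First I would record the two structural facts that follow from $\rank(V(\gamma))=k$: each reduced vector $\hat v_i(\gamma)$ is nonzero, and $\hat V(\gamma)^\dagger \hat V(\gamma)=I_k$; in particular $\hat V(\gamma)^\dagger \hat v_i(\gamma)=e_i$, the $i$-th column of $I_k$, since $\hat v_i(\gamma)$ is the $i$-th column of $\hat V(\gamma)$. Together with the hypothesis that $\beta_1,\beta_2,\dots,\beta_k$ in (\ref{betas}) are nonzero, this guarantees that the matrix $\Delta_\gamma$, the coefficient perturbations $\Delta_{\gamma,j}$ in (\ref{deltagammaj}), and hence the perturbed matrix polynomial $Q_\gamma(\lambda)$ in (\ref{Q}), are all well defined.

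Next I would carry out, as preparatory identities, the three computations displayed just before the theorem. The first is that $\Delta_\gamma(\mu_s)\hat v_s(\gamma)=\beta_s\,\Delta_\gamma\hat v_s(\gamma)$ for $s=1,2,\dots,k$: expanding $\Delta_\gamma(\mu_s)=\sum_{j=0}^m\Delta_{\gamma,j}\mu_s^j$ via (\ref{deltagammaj}) and collecting the scalar factor multiplying $\Delta_\gamma$ yields exactly $\beta_s$, by the definitions of $\alpha_{i,s}$ and $\beta_s$ in (\ref{betas}) (using the convention $\alpha_{i,s}=1$ when $\mu_i=0$). The second is that $s_\rho(F_\gamma[P,\Sigma])\,\hat u_i(\gamma)=P(\mu_i)\,\hat v_i(\gamma)$ for each $i$; this follows by writing out the block rows of the singular-vector relation $F_\gamma[P,\Sigma]\,v(\gamma)=s_\rho(F_\gamma[P,\Sigma])\,u(\gamma)$ and substituting the definitions of $\hat u_p(\gamma)$ and $\hat v_p(\gamma)$ in terms of the quantities $\theta_{i,j}$ in (\ref{quant}). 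The third is $\Delta_\gamma\hat v_i(\gamma)=-\,s_\rho(F_\gamma[P,\Sigma])\,\hat u_i(\gamma)/\beta_i$, obtained by inserting $\hat V(\gamma)^\dagger\hat v_i(\gamma)=e_i$ into the definition of $\Delta_\gamma$ and reading off the $i$-th column of $\hat U(\gamma)$.

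Finally I would assemble these: for each $i=1,2,\dots,k$,
\[
 Q_\gamma(\mu_i)\,\hat v_i(\gamma)
 = P(\mu_i)\,\hat v_i(\gamma) + \Delta_\gamma(\mu_i)\,\hat v_i(\gamma)
 = s_\rho(F_\gamma[P,\Sigma])\,\hat u_i(\gamma) + \beta_i\,\Delta_\gamma\hat v_i(\gamma)
 = 0 ,
\]
and since $\hat v_i(\gamma)\neq 0$ this says precisely that $\mu_i$ is an eigenvalue of $Q_\gamma(\lambda)$ with eigenvector $\hat v_i(\gamma)$. The only genuine work is the bookkeeping in the second preparatory identity — verifying that the corrections built from the $\theta_{i,j}$ telescope so that the off-diagonal divided-difference blocks cancel and only $P(\mu_i)\,\hat v_i(\gamma)$ survives on the right-hand side — together with the routine algebra identifying the scalar factor in the first identity with $\beta_s$; everything else is substitution. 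I expect that telescoping cancellation to be the main (though still elementary) obstacle, and I would handle it by induction on $p$ using the recursive definition of the divided differences in Definition \ref{uv}.
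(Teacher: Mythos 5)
Your proposal is correct and matches the paper's argument essentially exactly: the paper establishes precisely the identities $\Delta_\gamma(\mu_s)=\beta_s\Delta_\gamma$, $s_\rho(F_\gamma[P,\Sigma])\hat u_i(\gamma)=P(\mu_i)\hat v_i(\gamma)$, and $\Delta_\gamma\hat v_i(\gamma)=-s_\rho(F_\gamma[P,\Sigma])\hat u_i(\gamma)/\beta_i$ in the construction preceding the theorem, then combines them to get $Q_\gamma(\mu_i)\hat v_i(\gamma)=0$, noting $\hat v_i(\gamma)\neq 0$ from the rank hypothesis. The paper even states ``The next result follows immediately,'' so your reduction of the proof to assembling these three facts (with the telescoping via the $\theta_{i,j}$ identities handled as bookkeeping) is the intended argument.
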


\begin{remark}\label{rrank}  \textup{
For the case $k=2$, by \cite[Section 2]{klnk} (see also
\cite[Section 5]{papa}), if the matrix $P[\mu _1 ,\mu _2 ]$ is
nonsingular and $\gamma_*>0$ is a point where the singular value
$s_{2n -1} (F_{\gamma}[P,\{\mu _1 ,\mu _2\} ])$ attains its
maximum value, then $\rank (V(\gamma_*))=2 \,(=k)$. But for the
case $k>2$, as mentioned in \cite{psarrakos}, it is not easy to
obtain conditions ensuring $\rank (V(\gamma))=k$. However, in all
our experiments, the condition $\rank (V(\gamma))=k$ holds
generically. Also, $\beta_1 , \beta_2 , \dots , \beta_k \ne 0$
appears to be generic.
  }
\end{remark}

\section{Bounds for $D_w(P,\Sigma)$}  \label{bounds}

The construction of the perturbed matrix polynomial
$Q_{\gamma}(\lambda )$ in (\ref{Q}) yields immediately an upper
bound for the distance $D_w(P,\Sigma)$. In particular, from
(\ref{deltagammaj}) we have
\begin{equation*}
 \left\| \Delta_{\gamma ,j} \right\|_2 \le \frac{w_j}{k}
 \sum\limits_{i = 1}^k {\left( {\frac{1}{{w\left( {\left|
 {{\mu _i}} \right|} \right)}}} \right)\left\| \Delta_\gamma
 \right\|_2 , \;\;\;  j =0, 1, \dots ,k } .
\end{equation*}
Consequently, if all $\beta_1 , \beta_2 , \dots , \beta_k$ in
(\ref{betas}) are nonzero, then for any ${\gamma}
> 0$ such that $\rank (V(\gamma))=k$, it follows
\begin{equation}\label{Ubound}
 D_w(P,\Sigma ) \le \frac{1}{k} \sum\limits_{i = 1}^k {\left(
 {\frac{1}{{w\left( {\left| {{\mu _i}} \right|} \right)}}}
 \right)\left\| \Delta_\gamma  \right\|_2 . }
\end{equation}

Next, we compute a lower bound for $D_w(P,\Sigma)$. It is worth
mentioning that for calculating this lower bound, the condition
$\rank ( V(\gamma) ) = k$ is not necessary.

\begin{lemma}\label{lem1}
Suppose that $P(\lambda)$ is a matrix polynomial as in
(\ref{plambda}), and $\mu_1, \mu_2,\dots, \mu_k$ are $k$ distinct
eigenvalues of $P(\lambda)$. Then, for every $\gamma > 0$, it
holds that ${s_\rho} \left( F_\gamma [ P,\Sigma ] \right) = 0$
(recall that $\rho=nk-k+1$).
\end{lemma}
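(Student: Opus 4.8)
The plan is to show that the $nk\times nk$ matrix $F_\gamma[P,\Sigma]$ has rank at most $nk-k$; then $F_\gamma[P,\Sigma]$ has at most $nk-k$ nonzero singular values, so $s_\rho\left(F_\gamma[P,\Sigma]\right)=0$ (recall $\rho=nk-k+1$), which is exactly the assertion. The argument will be uniform in $\gamma$, so the hypothesis $\gamma>0$ plays no essential role beyond what is claimed.

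To bound the rank I would recognize $F_\gamma[P,\Sigma]$ as an evaluation of $P(\lambda)$ at a single $k\times k$ matrix. Let $T_\gamma$ be the lower bidiagonal $k\times k$ matrix with diagonal entries $\mu_1,\mu_2,\dots,\mu_k$ and every subdiagonal entry equal to $\gamma$. The classical identification of divided differences with the entries of $f(T_\gamma)$ for such a bidiagonal matrix (Opitz's formula) gives, for every monomial $\lambda^j$, that the $(i,l)$ entry of $T_\gamma^{\,j}$ equals $\gamma^{\,i-l}(\lambda^j)[\mu_l,\dots,\mu_i]$ when $i\ge l$ and $0$ otherwise; summing these against the coefficient matrices $A_j$ and using linearity of the divided difference in its argument, one obtains
\[
 F_\gamma[P,\Sigma]\;=\;\sum_{j=0}^m T_\gamma^{\,j}\otimes A_j ,
\]
where the Kronecker product is taken so that the right-hand side is a $k\times k$ array of $n\times n$ blocks. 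Establishing this identity carefully — the divided-difference/bidiagonal-power computation together with the block bookkeeping — is the step I expect to be the main obstacle; everything after it is routine.

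Granting the identity, the conclusion is quick. Since $\mu_1,\dots,\mu_k$ are distinct, the triangular matrix $T_\gamma$ has $k$ distinct eigenvalues and hence $T_\gamma=S\Lambda S^{-1}$ with $\Lambda=\textup{diag}(\mu_1,\dots,\mu_k)$ and $S\in\mathbb{C}^{k\times k}$ invertible. Then
\[
 F_\gamma[P,\Sigma]=\sum_{j=0}^m\left(S\Lambda^{\,j}S^{-1}\right)\otimes A_j
 =(S\otimes I_n)\left(\sum_{j=0}^m\Lambda^{\,j}\otimes A_j\right)(S\otimes I_n)^{-1}
 =(S\otimes I_n)\,\textup{diag}\!\left(P(\mu_1),\dots,P(\mu_k)\right)(S\otimes I_n)^{-1},
\]
because $\sum_j\Lambda^{\,j}\otimes A_j$ is block diagonal with $i$th diagonal block $\sum_j\mu_i^{\,j}A_j=P(\mu_i)$. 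Each $\mu_i$ is an eigenvalue of $P(\lambda)$, so $P(\mu_i)$ is singular and $\rank P(\mu_i)\le n-1$; thus the block diagonal matrix has rank at most $k(n-1)=nk-k$, and since $S\otimes I_n$ is invertible the same bound holds for $\rank F_\gamma[P,\Sigma]$, which finishes the argument.

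An alternative that sidesteps the bidiagonal identity is to exhibit $k$ independent vectors in the kernel of $F_\gamma[P,\Sigma]$ directly: for each $p$ pick $w_p\neq0$ with $P(\mu_p)w_p=0$ and let $z^{(p)}\in\mathbb{C}^{nk}$ have its first $p-1$ blocks zero, its $p$th block equal to $w_p$, and its $l$th block ($p<l\le k$) equal to $\left(\prod_{r=p+1}^{l}\frac{\gamma}{\mu_p-\mu_r}\right)w_p$. Checking $F_\gamma[P,\Sigma]\,z^{(p)}=0$ reduces, block row by block row, to a telescoping divided-difference identity obtained by writing $P(\lambda)w_p=(\lambda-\mu_p)h_p(\lambda)$ with $h_p(\lambda)=P(\lambda)w_p/(\lambda-\mu_p)$ and applying the Leibniz rule for divided differences; the staircase pattern of the leading nonzero blocks (the $p$th block of $z^{(p)}$ is $w_p\neq0$) then gives linear independence, so $\dim\ker F_\gamma[P,\Sigma]\ge k$ and again $s_\rho\left(F_\gamma[P,\Sigma]\right)=0$.
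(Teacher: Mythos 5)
Your main argument is correct and takes a genuinely different route from the paper. The paper's proof proceeds exactly as your \emph{alternative}: it writes down, for each $p=1,\dots,k$, the vector with blocks $\nu_p$ and $\bigl(\prod_{j=p+1}^{l}\theta_{p,j}\bigr)\nu_p$ (your $w_p$, $\prod_{r}\gamma/(\mu_p-\mu_r)$), uses the recursion $F_{i,j}=\theta_{j,i}(F_{i-1,j}+F_{i,j+1})$ together with the identity $\theta_{i,j}(\theta_{i,q}+\theta_{q,j})=\theta_{i,q}\theta_{q,j}$ to verify these $k$ vectors lie in $\ker F_\gamma[P,\Sigma]$, and then invokes the staircase pattern for independence. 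Your primary argument instead identifies $F_\gamma[P,\Sigma]=\sum_{j=0}^m T_\gamma^{\,j}\otimes A_j$ via Opitz's formula for the lower bidiagonal $T_\gamma$ (which is valid, since $T_\gamma$ is obtained from the standard subdiagonal-$1$ form by a diagonal similarity with $\textup{diag}(1,\gamma,\dots,\gamma^{k-1})$, producing exactly the factors $\gamma^{i-l}$), diagonalizes $T_\gamma$ using the distinctness of the $\mu_i$, and concludes $F_\gamma[P,\Sigma]$ is similar to $\textup{diag}(P(\mu_1),\dots,P(\mu_k))$. This is cleaner and actually proves more than the paper does: it shows $\rank F_\gamma[P,\Sigma]=\sum_{i=1}^k\rank P(\mu_i)$ for all $\gamma$, from which the bound $\le nk-k$ is immediate whenever each $\mu_i$ is an eigenvalue. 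The trade-off is that it relies on the Opitz/Newton-form identity for divided differences of functions of bidiagonal matrices, whereas the paper's computation is entirely self-contained and elementary, if less illuminating. Both routes are sound; you should be aware that what you flagged as ``the main obstacle'' (the Kronecker/Opitz identity) does go through, and that your fallback is, up to notation, the paper's own proof.
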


\begin{proof} Since $\mu_1, \mu_2,\ldots, \mu_k$ are distinct
eigenvalues of $P(\lambda)$, there exist $k$ nonzero (but not
necessarily linearly independent) vectors $\nu_1, \nu_2, \ldots,
\nu_k$ satisfying $P(\mu_i)\nu_i=0$, $i=1,2,\dots,k$.

Recalling Definition \ref{Fg} and the quantities $\theta_{i,j}$
$(i\ne j)$ defined by (\ref{quant}), the $nk \times nk$ matrix
$F_\gamma \left[ {P,\Sigma} \right]$ can be written in the form
\[
   \hspace{-7mm}
   {\small
   F_\gamma \left[ {P,\Sigma} \right] = \left [
   \begin{array}{cccccc}
   P(\mu_1)                        & 0        & 0 & \cdots & 0 \\
   \theta_{1,2}(P(\mu_1)-P(\mu_2)) & P(\mu_2) & 0 & \cdots & 0 \\
   \theta_{1,3}[\theta_{1,2} P(\mu_1)-(\theta_{1,2}+\theta_{2,3})P(\mu_2)
   +\theta_{2,3}P(\mu_3)] & \theta_{2,3}(P(\mu_2)-P(\mu_3)) & P(\mu_3) & \cdots & 0 \\
   \vdots & \vdots & \vdots & \ddots & \vdots \\
   \ast & \ast & \ast  & \cdots & P(\mu_k)
   \end{array}  \right ]  . }
\]
Denoting the $(i,j)$th $\,n\times n\,$ block of this matrix by
$F_{i,j}$, it follows readily that
\begin{equation} \label{FFFF}
    F_{i,j} = \theta_{j,i} ( F_{i-1,j} + F_{i,j+1} ) ,
    \;\;\;   1 \le j < i \le k .
\end{equation}
Moreover, for all distinct $i$, $j$ and $q$ in $\{ 1 , 2 , \dots ,
k \}$, it holds that
\begin{equation} \label{tttt}
     \theta_{i,j} ( \theta_{i,q} + \theta_{q,j} )
     = \frac{\gamma}{\mu_i - \mu_j} \,
     \frac{\gamma(\mu_i - \mu _j)}{(\mu_i - \mu_q)(\mu_q - \mu_j)}
     = \theta_{i,q} \, \theta_{q,j}  .
\end{equation}
By straightforward calculations, and using (\ref{FFFF}) and
(\ref{tttt}), one can verify that the $k$ (nonzero) linearly
independent vectors
\[
  \left[  \begin{array}{c}
    \nu_1 \\
    \theta_{1,2} \nu_1 \\
    \theta_{1,2}\theta_{1,3} \nu_1 \\
    \vdots \\
    \left( \prod\limits_{j=2}^{k-1} \theta_{1,j} \right) \nu_1 \\
    \left( \prod\limits_{j=2}^{k} \theta_{1,j} \right) \nu_1 \end{array} \right] ,
  \left[  \begin{array}{c}
    0 \\
    \nu_2 \\
    \theta_{2,3}\nu_2  \\
    \vdots \\
    \left( \prod\limits_{j=3}^{k-1} \theta_{2,j} \right) \nu_2 \\
    \left( \prod\limits_{j=3}^{k} \theta_{2,j} \right) \nu_2 \end{array} \right] ,
      \left[  \begin{array}{c}
    0 \\
    0 \\
    \nu_3 \\
    \vdots \\
    \left( \prod\limits_{j=4}^{k-1} \theta_{3,j} \right) \nu_3 \\
    \left( \prod\limits_{j=4}^{k} \theta_{3,j} \right) \nu_3 \end{array} \right] ,
    \dots ,
      \left[  \begin{array}{c}
    0 \\
    0 \\
    \vdots \\
    0 \\
    \nu_{k-1} \\
    \theta_{k-1,k} \nu_{k-1} \end{array} \right] ,
      \left[  \begin{array}{c}
    0 \\
    0 \\
    \vdots \\
    0 \\
    0 \\
    \nu_k \end{array} \right]
\]
lie in the null space of the matrix $F_\gamma \left[ {P,\Sigma}
\right]$. Thus, the rank of $F_\gamma \left[ {P,\Sigma} \right]$
is less than or equal to $kn - k = \rho - 1$, and the proof is
complete.
\end{proof}

The next lemma yields a lower bound of $D_w(P,\Sigma)$. We need to
define the nonnegative quantities
\[
   \varpi \left[ \mu_i \right ] = w \left( \left| \mu_i \right|
   \right),  \;\;\; i=1,2,\dots,k,
\]
\[
   \varpi \left[ \mu _i , \mu _{i + 1}  \right] =
   \sum \limits_{j = 0}^m   w_j  \frac{\left| \mu_i^j - \mu _{i+1}^j
   \right| }{ \left | \mu_i - \mu_{i+1}  \right|} , \;\;\; i=1,2,\dots,k-1,
\]
and (recursively)
\[
  \varpi \left[ \mu_i , \mu_{i+1} , \ldots ,\mu_{i+t} \right]
  = \frac{ \varpi \left[ \mu_i , \ldots , \mu_{i+t-1} \right]
    + \varpi \left[ \mu_{i+1} , \ldots , \mu_{i+t} \right] }
    { \left| \mu_i - \mu_{i+t}  \right | } ,
    \;\;\; i = 1 , 2 , \dots , k-2 ,  \; t = 2 , 3 , \dots , k-i ,
\]
and the $k\times k$ matrix
\[
 F_{\gamma}  \left[ {\varpi ,\Sigma } \right] = \left[ {\begin{array}{*{20}c}
   {\varpi \left[ {\mu _1 } \right]}                    & 0                                            &  \cdots  & 0  \\
   \gamma \varpi \left[ \mu_1 , \mu_2  \right]          & \varpi \left[ \mu_2  \right]             &  \cdots  & 0  \\
   \gamma^2 \varpi \left[ \mu _1 ,\mu_2 ,\mu_3  \right] & {\gamma \varpi \left[ \mu_2 , \mu_3 \right]} &  \cdots  & 0  \\
    \vdots                                              &  \vdots                                      &  \ddots  &  \vdots   \\
   {\gamma^{k-1} \varpi \left[ \mu_1 ,\mu_2 , \ldots , \mu_k \right]} & \gamma^{k-2} \varpi \left[ \mu_2 , \mu_3 , \ldots , \mu_k  \right] & \cdots  & \varpi \left[ \mu_k \right]
\end{array}} \right]   .
\]

\begin{lemma}\label{lowerbound}
Suppose that the matrix polynomial $Q(\lambda) = P(\lambda) +
\Delta (\lambda)$ belongs to ${\mathcal{B}}(P,\varepsilon ,w)$. If
$k$ distinct scalars $\mu_1, \mu_2, \dots, \mu_k \in \mathbb{C}$
are eigenvalues of $Q(\lambda)$, then for any $\gamma>0$,
\begin{equation}  \label{lowb}
  \varepsilon  \ge \frac{ s_\rho \left( F_\gamma \left[ P , \Sigma \right] \right) }
  { \left\| F_{\gamma}  \left[ \varpi ,\Sigma \right] \right\|_2 } .
\end{equation}
\end{lemma}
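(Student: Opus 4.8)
The plan is to compare $F_\gamma[P,\Sigma]$ with the matrix built from $Q(\lambda)$ in place of $P(\lambda)$ and then to invoke Lemma~\ref{lem1}. Since divided differences depend linearly on the underlying function, writing $Q(\lambda)=P(\lambda)+\Delta(\lambda)$ gives
\[
   F_\gamma[Q,\Sigma] \;=\; F_\gamma[P,\Sigma] + E_\gamma ,
\]
where $E_\gamma$ is the $nk\times nk$ block lower triangular matrix whose $(i,j)$ block equals $\gamma^{\,i-j}\,\Delta[\mu_j,\dots,\mu_i]$ for $i\ge j$ and $0$ for $i<j$. Because $\mu_1,\dots,\mu_k$ are $k$ distinct eigenvalues of $Q(\lambda)$, Lemma~\ref{lem1} applied to $Q$ yields $s_\rho\!\left(F_\gamma[Q,\Sigma]\right)=0$. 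The standard singular value perturbation inequality $\left| s_\rho(X)-s_\rho(Y)\right|\le\|X-Y\|_2$ then gives
\[
   s_\rho\!\left(F_\gamma[P,\Sigma]\right) \;\le\; \|E_\gamma\|_2 ,
\]
so the whole problem reduces to proving $\|E_\gamma\|_2\le\varepsilon\,\bigl\|F_\gamma[\varpi,\Sigma]\bigr\|_2$.

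To bound each block of $E_\gamma$, I would expand $\Delta(\lambda)=\sum_{l=0}^m\Delta_l\lambda^l$ and use linearity of divided differences once more to write $\Delta[\mu_j,\dots,\mu_i]=\sum_{l=0}^m\Delta_l\,\pi_l[\mu_j,\dots,\mu_i]$ with $\pi_l(\lambda)=\lambda^l$. Since $Q\in\mathcal{B}(P,\varepsilon,w)$, we have $\|\Delta_l\|_2\le\varepsilon w_l$, hence
\[
   \bigl\|\Delta[\mu_j,\dots,\mu_i]\bigr\|_2 \;\le\; \varepsilon\sum_{l=0}^m w_l\,\bigl|\pi_l[\mu_j,\dots,\mu_i]\bigr| .
\]
A short induction on the length of the node list then shows $\sum_{l=0}^m w_l\,|\pi_l[\mu_j,\dots,\mu_i]|\le\varpi[\mu_j,\dots,\mu_i]$: the cases $\sum_l w_l|\mu_i|^l=\varpi[\mu_i]$ and $\sum_l w_l\,|\mu_i^l-\mu_{i+1}^l|/|\mu_i-\mu_{i+1}|=\varpi[\mu_i,\mu_{i+1}]$ are immediate from the definitions, and the inductive step follows by applying the triangle inequality to the defining recursion of $\pi_l[\cdot]$ and comparing with the recursion defining $\varpi[\cdot]$. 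Consequently the $(i,j)$ block of $E_\gamma$ has spectral norm at most $\varepsilon\,\gamma^{\,i-j}\varpi[\mu_j,\dots,\mu_i]$, i.e.\ at most $\varepsilon$ times the $(i,j)$ entry of $F_\gamma[\varpi,\Sigma]$ (and $0$ above the block diagonal).

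It remains to pass from these blockwise estimates to a bound on $\|E_\gamma\|_2$. Here I would use the block-norm domination principle: for a block matrix $M=(M_{ij})$, the scalar matrix $\widetilde M$ with entries $\widetilde M_{ij}=\|M_{ij}\|_2$ satisfies $\|M\|_2\le\|\widetilde M\|_2$ (apply $M$ to a unit block vector $x=(x_1,\dots,x_k)$, note $\|(Mx)_i\|_2\le\sum_j\widetilde M_{ij}\|x_j\|_2$, and observe that $(\|x_1\|_2,\dots,\|x_k\|_2)^{\!\top}$ is a unit vector in $\mathbb{R}^k$). Applying this with $M=E_\gamma$, the matrix $\widetilde{E_\gamma}$ is entrywise dominated by $\varepsilon\,F_\gamma[\varpi,\Sigma]$, which has nonnegative entries; since the spectral norm is monotone on entrywise-nonnegative matrices ($0\le A\le B$ entrywise implies $A^{\!\top}A\le B^{\!\top}B$ entrywise and hence $\|A\|_2\le\|B\|_2$ by monotonicity of the spectral radius on nonnegative matrices), we obtain $\|E_\gamma\|_2\le\|\widetilde{E_\gamma}\|_2\le\varepsilon\,\|F_\gamma[\varpi,\Sigma]\|_2$. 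Combined with $s_\rho(F_\gamma[P,\Sigma])\le\|E_\gamma\|_2$, this gives (\ref{lowb}).

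The step I expect to require the most care is this last one: the block-norm domination together with the entrywise monotonicity of $\|\cdot\|_2$, where one must keep track of the fact that the comparison matrices really are entrywise nonnegative so that spectral-radius monotonicity applies. Everything else—the reduction via Lemma~\ref{lem1}, the linearity of divided differences, and the induction producing $\varpi[\cdot]$ from $\pi_l[\cdot]$—is routine.
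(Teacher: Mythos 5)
Your proposal follows essentially the same route as the paper: decompose $F_\gamma[Q,\Sigma]=F_\gamma[P,\Sigma]+F_\gamma[\Delta,\Sigma]$, apply Lemma~\ref{lem1} to $Q$ and the Weyl singular value inequality to reduce to $s_\rho(F_\gamma[P,\Sigma])\le\|F_\gamma[\Delta,\Sigma]\|_2$, then bound the divided differences of $\Delta$ by $\varepsilon\,\varpi[\cdot]$ and pass from blockwise estimates to $\|F_\gamma[\Delta,\Sigma]\|_2\le\varepsilon\|F_\gamma[\varpi,\Sigma]\|_2$. The only stylistic difference is that you package the last step as two named general principles (block-norm domination and entrywise monotonicity of $\|\cdot\|_2$ on nonnegative matrices), whereas the paper carries out the equivalent computation directly on a norm-attaining unit vector; the substance is identical and correct.
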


\begin{proof} It is easy to see that
\[
 \left \| \Delta \left( \mu_i \right) \right\|_2  \le
 \sum\limits_{j=0}^m \left\| \Delta_j \right \|_2 \left| \mu_i \right |^j  \le
 \varepsilon \sum\limits_{j=0}^m  w_j \left| \mu_i \right|^j  =
 \varepsilon w\left( \left| \mu _i \right| \right ) =
 \varepsilon \varpi \left[ \mu_i \right ] ,   \;\;\;  i=1,2,\dots,k,
\]
\[
 \left\| \Delta \left[ \mu_i , \mu_{i+1} \right] \right\|_2 \le
 \sum\limits_{j=0}^m \left\| \Delta_j \right\|_2 \left|
 \frac{\mu_i^j - \mu_{i+1}^j}{\mu_i - \mu_{i+1} } \right|   \le
 \varepsilon \varpi \left[ \mu_i , \mu_{i+1}  \right] ,
 \;\;\; i=1,2,\dots,k-1 ,
\]
and
\begin{eqnarray*}
  \left\| \Delta\left[ \mu_i , \mu_{i+1}, \mu_{i+2}\right]\right\|_2
  &\le& \frac{1}{ \left| \mu_i - \mu_{i+2} \right| }
       \left( \left\| \Delta \left[ \mu_i , \mu_{i+1} \right] \right\|_2
       + \left\| \Delta\left[ \mu_{i+1},\mu_{i+2}\right]\right\|_2 \right) \\
  &\le& \frac{1}{ \left| \mu_i - \mu_{i+2} \right| } \left( \varepsilon
       \sum\limits_{j=0}^m w_j \frac{ \left| \mu_i^j - \mu_{i+1}^j\right| }
       { \left| \mu_i - \mu_{i+1} \right| }  + \varepsilon
       \sum\limits_{j=0}^m  w_j \frac{ \left| \mu_{i+1}^j - \mu_{i+2}^j  \right| }
       { \left| \mu_{i+1} - \mu _{i+2} \right|  } \right)  \\
  &\le& \varepsilon \varpi \left[ \mu_i , \mu_{i+1} , \mu_{i+2} \right] ,
       \;\;\;\; i=1,2,\dots,k-2 .
\end{eqnarray*}
Similarly, we can obtain
\[
  \left\| {\Delta \left[  \mu_i, \ldots , \mu_{i + t} \right]} \right\|_2 \le
  \varepsilon \varpi \left[ \mu_i , \ldots , \mu_{i + t} \right] , \;\;\;
  i = 1 , 2 , \dots , k-2 ,  \; t = 2 , 3 ,  \dots , k-i .
\]

As in the proof of Theorem 2.4 of \cite{psarrakos}, we can
consider a unit vector
\[
  x = \left[ \begin{array}{c} {x_1} \\ {x_2} \\ \vdots \\ {x_k} \end{array} \right]
 \in \mathbb{C}^{kn} \;\;\; \left( {x_i} \in \mathbb{C}^n , \;
 i = 1, 2, \ldots ,k \right)
\]
such that
\begin{eqnarray*}
  \left \| F_{\gamma} \left[ \Delta , \Sigma  \right] \right\|_2^2
  &=& \left\| F_{\gamma} \left[ \Delta , \Sigma \right] x \right\|_2^2 \\
  &=& \left\| \Delta \left( \mu_1 \right) x_1 \right\|_2^2
    + \left\| \gamma \Delta \left[ \mu_1 , \mu_2 \right] x_1 + \Delta \left( \mu_2 \right) x_2 \right\|_2^2 \\
  & &  +\, \cdots + \left \| \sum\limits_{i=1}^k \gamma^{k-i} \Delta \left[ \mu_i, \ldots , \mu_k \right] x_i \right\|_2^2 \\
  &\le& \left( \varepsilon \varpi \left[ \mu_1 \right] \right)^2 \left \| x_1 \right\|_2^2 +
        \left( \gamma \varepsilon \varpi \left[ \mu_1 ,\mu_2 \right] \right)^2 \left\| x_1  \right\|_2^2
        + \left( \varepsilon \varpi \left[ \mu_2 \right] \right)^2 \left\| x_2 \right\|_2^2  \\
  & & + \,2\gamma \left( \varepsilon \varpi \left[ \mu_1 ,\mu_2 \right] \right)
        \left( \varepsilon \varpi \left[ \mu_2 \right] \right)\left\| x_1 \right\|_2 \left\| x_2 \right\|_2
        +  \cdots + \left( \varepsilon \varpi \left[ \mu_k \right] \right)^2 \left\| x_k  \right\|_2^2  \\
  &=&   \varepsilon^2   {\small     \left\| \left[ {\begin{array}{*{20}c}
       {\varpi \left[ {\mu _1 } \right]}                    & 0                                            &  \cdots  & 0  \\
       \gamma \varpi \left[ \mu_1 , \mu_2  \right]          & \varpi \left[ \mu_2  \right]             &  \cdots  & 0  \\
       \gamma^2 \varpi \left[ \mu _1 ,\mu_2 ,\mu_3  \right] & {\gamma \varpi \left[ \mu_2 , \mu_3 \right]} &  \cdots  & 0  \\
       \vdots                                              &  \vdots                                      &  \ddots  &  \vdots   \\
       {\gamma^{k-1} \varpi \left[ \mu_1 ,\mu_2 , \ldots , \mu_k \right]} & \gamma^{k-2} \varpi \left[ \mu_2 , \mu_3 , \ldots , \mu_k  \right] & \cdots  & \varpi \left[ \mu_k \right]  \\
       \end{array}} \right]
        \left[ \begin{array}{c} \left\|{x_1}\right\|_2 \\ \left\|{x_2}\right\|_2 \\
        \vdots \\ \left\|{x_k}\right\|_2 \end{array} \right] \right\|_2^2  }   \\
  &\le& \varepsilon^2 \left\| F_\gamma  \left[ \varpi , \Sigma  \right] \right\|_2^2 .
\end{eqnarray*}
Moreover, since the $k$ distinct scalars
$\mu_1, \mu_2,\dots, \mu_k$ are eigenvalues of $ Q(\lambda )
=P(\lambda ) + \Delta (\lambda )$, Lemma \ref{lem1} implies that
${s_\rho }\left( {{F_\gamma }\left[ {Q,\Sigma } \right]}
\right)=0$. Applying the Weyl inequalities (e.g., see Corollary
5.1 of \cite{demmel}) for singular values, with respect to the
relation $F_\gamma \left[ {Q,\Sigma } \right] = F_\gamma \left[
P,\Sigma \right] + F_\gamma \left[ \Delta ,\Sigma \right]$, yields
\[
 s_\rho \left( F_\gamma \left[ P , \Sigma \right] \right)
 \le \left\| {{F_\gamma }\left[ {\Delta ,\Sigma } \right]} \right\|_2
 \le \varepsilon \left\| F_\gamma  \left[ \varpi , \Sigma  \right] \right\|_2
\]
for any $\gamma>0$. This completes the proof.
\end{proof}

Keeping in mind Definition \ref{dis}, the above lemma yields a
lower bound for $D_w(P,\Sigma )$, namely,
\begin{equation}\label{Lbound}
 D_w(P,\Sigma ) \ge \frac{ s_\rho \left( F_\gamma \left[ P , \Sigma \right] \right) }
  { \left\| F_\gamma  \left[ \varpi ,\Sigma \right] \right\|_2 } .
\end{equation}

It will be convenient to denote the lower bound in (\ref{Lbound})
by $\beta _{low} (P,\Sigma ,{\gamma})$ and the upper bound in
(\ref{Ubound}) by $\beta _{up} (P,\Sigma ,{\gamma})$, i.e.,
\begin{equation}\label{low}
 \beta _{low} (P,\Sigma ,{\gamma}) = \frac{ s_\rho \left( F_\gamma
 \left[ P , \Sigma \right] \right) }{ \left\| F_{\gamma}
 \left[ \varpi ,\Sigma \right] \right\|_2 }  ,
\end{equation}
and
\begin{equation}\label{up}
 \beta _{up} (P,\Sigma ,{\gamma}) = \frac{1}{k}\sum\limits_{i = 1}^k
 {\left( {\frac{1}{{w\left( {\left| {{\mu _i}} \right|} \right)}}} \right)
 \left\| {{\Delta _\gamma }} \right\|_2},
\end{equation}

Our results so far are summarized in the following theorem.

\begin{theorem}\label{thm11}
Consider an $n \times n$ matrix polynomial $P(\lambda)$ as in
(\ref{plambda}) and a given set of $k \le n$ distinct complex
numbers $\Sigma=\{\mu_1, \mu_2,\hdots, \mu_k\}$.
\begin{description}
 \item[(a)] For any $\gamma>0$, $D_w(P,\Sigma ) \ge \beta _{low} (P,\Sigma ,{\gamma})$.
 \item[(b)] If the quantities $\beta_1 , \beta_2 , \dots , \beta_k$ in (\ref{betas})
            are nonzero, then for any $\gamma>0$ such that $\rank (V(\gamma))=k$, $D_w(P,\Sigma) \le
            \beta_{up} (P,\Sigma,\gamma)$ and the matrix polynomial $Q_{\gamma}(\gamma)$
            in (\ref{Q}) lies on the boundary of ${\mathcal{B}}(P,\beta_{up}(P,\Sigma,{\gamma}),w)$.
\end{description}
\end{theorem}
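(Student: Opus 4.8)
The plan is to assemble parts (a) and (b) from the material already developed; neither part needs a new idea, only careful bookkeeping of the two bounds. For part (a) I would invoke Lemma~\ref{lowerbound} directly: it shows that every $\varepsilon$ appearing in the set that defines $D_w(P,\Sigma)$ in Definition~\ref{dis} --- that is, every $\varepsilon$ for which some $Q(\lambda)\in\mathcal{B}(P,\varepsilon,w)$ has $\mu_1,\dots,\mu_k$ among its eigenvalues --- satisfies $\varepsilon\ge s_\rho(F_\gamma[P,\Sigma])/\|F_\gamma[\varpi,\Sigma]\|_2$ for every $\gamma>0$. Recalling the notation (\ref{low}), this gives $D_w(P,\Sigma)\ge\beta_{low}(P,\Sigma,\gamma)$ for every $\gamma>0$, which is exactly (\ref{Lbound}); so (a) is immediate.

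For part (b), fix $\gamma>0$ with $\rank(V(\gamma))=k$ and recall the standing hypothesis $\beta_1,\dots,\beta_k\ne0$. By the construction of Section~\ref{perturbation}, the scalars $\mu_1,\dots,\mu_k$ are eigenvalues of $Q_\gamma(\lambda)=P(\lambda)+\Delta_\gamma(\lambda)$ as in (\ref{Q}). I would then check that $Q_\gamma(\lambda)\in\mathcal{B}(P,\varepsilon,w)$ with $\varepsilon:=\beta_{up}(P,\Sigma,\gamma)$ from (\ref{up}): each coefficient $\Delta_{\gamma,j}$ in (\ref{deltagammaj}) is a fixed complex scalar times the single matrix $\Delta_\gamma$, and since $|(\bar\mu_i/|\mu_i|)^j|\le1$ (with the convention of Section~\ref{perturbation} when $\mu_i=0$) while $w(|\mu_i|)>0$, the triangle inequality yields $\|\Delta_{\gamma,j}\|_2\le\frac{w_j}{k}\sum_{i=1}^k\frac{1}{w(|\mu_i|)}\|\Delta_\gamma\|_2=\varepsilon\,w_j$ for $j=0,1,\dots,m$, exactly as in the derivation of (\ref{Ubound}). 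Hence $Q_\gamma\in\mathcal{B}(P,\varepsilon,w)$ and $\Sigma\subseteq\sigma(Q_\gamma)$, so Definition~\ref{dis} gives $D_w(P,\Sigma)\le\varepsilon=\beta_{up}(P,\Sigma,\gamma)$.

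It then remains to show that $Q_\gamma$ lies on the boundary of $\mathcal{B}(P,\beta_{up}(P,\Sigma,\gamma),w)$, i.e.\ that at least one of the constraints $\|\Delta_{\gamma,j}\|_2\le\varepsilon\,w_j$ is active. Here I would single out the coefficient $j=0$: in (\ref{deltagammaj}) the factor $(\bar\mu_i/|\mu_i|)^0$ equals $1$ for every $i$, so $\Delta_{\gamma,0}=w_0\Bigl(\frac{1}{k}\sum_{i=1}^k\frac{1}{w(|\mu_i|)}\Bigr)\Delta_\gamma$, whence $\|\Delta_{\gamma,0}\|_2=w_0\cdot\frac{1}{k}\sum_{i=1}^k\frac{1}{w(|\mu_i|)}\|\Delta_\gamma\|_2=\varepsilon\,w_0$ exactly. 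Thus the $j=0$ constraint holds with equality, which places $Q_\gamma$ on the boundary of $\mathcal{B}(P,\beta_{up}(P,\Sigma,\gamma),w)$ and completes (b).

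I do not anticipate a real obstacle, since both bounds were effectively established before the theorem was stated; the only point needing a little care is the boundary assertion, and the observation that the leading ($j=0$) constraint carries no phase factor --- and is therefore automatically tight --- settles it. (In the degenerate case $\|\Delta_\gamma\|_2=0$ one has $\varepsilon=0$, $Q_\gamma=P$, and $P$ is the unique, hence boundary, element of $\mathcal{B}(P,0,w)$; so nothing is lost there either.)
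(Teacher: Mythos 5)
Your plan is essentially the paper's, which presents Theorem~\ref{thm11} without a separate proof as a summary of the preceding work: part (a) is Lemma~\ref{lowerbound} read against Definition~\ref{dis} (yielding (\ref{Lbound})), and the inequality in (b) is the derivation of (\ref{Ubound}) from the construction of $Q_\gamma$ in Section~\ref{perturbation}. Your explicit verification that the $j=0$ constraint is tight (since $(\bar\mu_i/|\mu_i|)^0=1$ makes the scalar multiplying $\Delta_\gamma$ in $\Delta_{\gamma,0}$ real and positive, so $\|\Delta_{\gamma,0}\|_2=\beta_{up}(P,\Sigma,\gamma)\,w_0$ exactly) is a correct and welcome filling-in of the boundary claim, which the paper asserts without comment.
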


Next we consider the case $\gamma=0$. For $i=1,2,\dots,k$, let
$\tilde u_i ,\tilde v_i \in \mathbb{C}^n$ be a pair of left and
right singular vectors of $P(\mu_i)$ corresponding to
$\sigma_i=s_n(P(\mu_i))$, respectively. If the vectors $\tilde
v_1, \tilde v_2 , \dots, \tilde v_k$ are linearly independent,
then we define the constant matrix
\begin{equation*}
 {\Delta _0} =  - \left[ \,{\tilde u_1} \; {\tilde u_1} \; \cdots \; {\tilde u_k} \, \right]
 \textup{diag} \left \{ {\sigma _1} , {\sigma _2} , \dots , {\sigma _k} \right \}
 \left[ \, {\tilde v_1} \; {\tilde v_2} \; \cdots \; {\tilde v_k} \, \right]^\dag
\end{equation*}
and observe that $\left[ \, {\tilde v_1} \; {\tilde v_2} \; \cdots
\; {\tilde v_k} \, \right]^\dag \left[ \, {\tilde v_1} \; {\tilde
v_2} \; \cdots \; {\tilde v_k} \, \right] = I_k$. Therefore, the
matrix polynomial
\begin{equation}\label{q0}
 Q_0 (\lambda ) = P(\lambda ) + \Delta _0 (\lambda ) = A_m \lambda^m
 + A_{m-1} \lambda^{m-1} + \cdots + A_1 \lambda + \left(A_0 + \Delta _0 \right) ,
\end{equation}
lies on the boundary of ${\mathcal{B}} \left( P , \frac{{\left\|
{\Delta_0 } \right\|_2}}{w_0} , w \right)$ and satisfies
\[
 Q_0 (\mu_i )\tilde v_i  = P(\mu_i )\tilde v_i  + \Delta_0 (\mu_i)\tilde v_i
 = \sigma_i \tilde u_i - \sigma_i \tilde u_i  = 0, \;\;\; i = 1 , 2 , \ldots , k .
\]
Hence, the scalars $\mu_1,\mu_2,\dots,\mu_k$ are eigenvalues of
the matrix polynomial $Q_0 (\lambda)$ in (\ref{q0}) with
corresponding eigenvectors ${{\tilde  v}_1}, {{\tilde v}_2} ,
\dots , {{\tilde v}_k}$, respectively.

\begin{theorem}
Let $\gamma=0$, and let $\tilde u_i , \tilde v_i \in
\mathbb{C}^{n}$ be a pair of left and right singular vectors of
$P(\mu_i)$ corresponding to $\sigma_i=s_n(P(\mu_i))$,
respectively, for every $i = 1 ,  2 , \ldots , k$. If the vectors
$\tilde v_1 , \tilde v_2 , \ldots , \tilde v_k$ are linearly
independent, then the matrix polynomial $Q_0 (\lambda)$ in
(\ref{q0}) lies on the boundary of $\mathcal{B} \left( P ,
\frac{{\left\| \Delta_0 \right\|_2}}{w_0} , w \right)$ and has
$\mu_1 , \mu_2 , \dots , \mu_k$ as eigenvalues.
\end{theorem}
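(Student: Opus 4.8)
The plan is to verify directly that the explicitly constructed matrix polynomial $Q_0(\lambda)$ in (\ref{q0}) has the two asserted properties; since $\gamma=0$ the general construction of Section~\ref{perturbation} degenerates so that $\Delta_0(\lambda)$ perturbs only the constant coefficient, and essentially nothing beyond the defining relations of singular vectors and one standard fact about the Moore--Penrose pseudoinverse is needed. First I would record that key algebraic fact. Because $\tilde v_1,\tilde v_2,\dots,\tilde v_k$ are assumed linearly independent, the $n\times k$ matrix $\tilde V=[\,\tilde v_1\;\tilde v_2\;\cdots\;\tilde v_k\,]$ has full column rank $k$, whence $\tilde V^\dag\tilde V=I_k$ and, in particular, $\tilde V^\dag\tilde v_i=e_i$, the $i$th standard basis vector of $\mathbb{C}^k$. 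Consequently
\[
  \Delta_0\tilde v_i = -[\,\tilde u_1\;\tilde u_2\;\cdots\;\tilde u_k\,]\,\textup{diag}\left\{\sigma_1,\sigma_2,\dots,\sigma_k\right\}e_i = -\sigma_i\tilde u_i , \qquad i=1,2,\dots,k .
\]

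Next I would check the eigenvalue property. Since $\Delta_0(\lambda)=\Delta_0$ is a constant matrix, $\Delta_0(\mu_i)=\Delta_0$ for every $i$. Combining this with the relation $P(\mu_i)\tilde v_i=\sigma_i\tilde u_i$ (the definition of $\tilde u_i,\tilde v_i$ as left and right singular vectors of $P(\mu_i)$ for $\sigma_i=s_n(P(\mu_i))$) and the identity just derived gives
\[
  Q_0(\mu_i)\tilde v_i = P(\mu_i)\tilde v_i + \Delta_0\tilde v_i = \sigma_i\tilde u_i - \sigma_i\tilde u_i = 0 , \qquad i=1,2,\dots,k ,
\]
so each of the (distinct) scalars $\mu_1,\mu_2,\dots,\mu_k$ is an eigenvalue of $Q_0(\lambda)$, with $\tilde v_i$ a corresponding eigenvector.

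Finally I would check admissibility and boundary membership. Put $\varepsilon_0=\|\Delta_0\|_2/w_0$; the coefficient perturbations of $Q_0$ are $\Delta_{0,0}=\Delta_0$ and $\Delta_{0,j}=0$ for $j=1,\dots,m$. Then $\|\Delta_{0,j}\|_2=0\le\varepsilon_0 w_j$ for $j\ge 1$, while $\|\Delta_{0,0}\|_2=\|\Delta_0\|_2=\varepsilon_0 w_0$, so $Q_0(\lambda)\in\mathcal{B}(P,\varepsilon_0,w)$; and since the constraint for $j=0$ holds with equality and $w_0>0$, the polynomial $Q_0(\lambda)$ does not lie in the interior of $\mathcal{B}(P,\varepsilon_0,w)$, i.e., it lies on its boundary. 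There is no genuine obstacle in this argument: it is the $\gamma=0$ counterpart of the construction already carried out, and the only step deserving care is the appeal to linear independence of $\tilde v_1,\dots,\tilde v_k$ to ensure $\tilde V^\dag\tilde V=I_k$, without which the cancellation $\Delta_0\tilde v_i=-\sigma_i\tilde u_i$ would fail. (If some $\sigma_i=0$, then $\mu_i$ is already an eigenvalue of $P(\lambda)$ and contributes nothing to $\Delta_0$, so the reasoning is unaffected.)
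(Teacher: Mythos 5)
Your proof is correct and follows essentially the same route as the paper: you use the full-column-rank hypothesis to get $\tilde V^\dag\tilde V = I_k$, apply the singular-vector relation $P(\mu_i)\tilde v_i=\sigma_i\tilde u_i$ to deduce $Q_0(\mu_i)\tilde v_i=0$, and note that the only nonzero coefficient perturbation $\Delta_{0,0}=\Delta_0$ attains the bound $\varepsilon_0 w_0$ with equality, placing $Q_0$ on the boundary. This is exactly the computation the paper carries out in the paragraph preceding the theorem.
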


In the next remark, we give an upper and a lower bounds for a
spectral norm distance from an $n \times n$ matrix $A$ to the set
of all matrices with $k$ prescribed eigenvalues. This issue is
explained in \cite{lipertk} in detail.

\begin{remark}\label{matrix}  \textup{
Consider the standard eigenproblem of a matrix $A \in
\mathbb{C}^{n \times n}$. In this special case, we set $P(\lambda
) = I\lambda  - A$ and $w = \{ w_0, w_1 \} = \{ 1 , 0 \}$. Thus,
for every $i=1,2,\dots,k$, $\varpi \left[ \mu_i \right ] = w
\left( \left| \mu_i \right| \right) = w_0$ and $\varpi \left[
{{\mu _i}, \dots , {\mu _j}} \right] = 0$ for every $j = \left\{
i+1 , i+2 , \dots , k \right\}$. Consequently, the matrix
$F_{\gamma} \left[ \varpi ,\Sigma \right]$ becomes the identity
matrix $I_{k}$ and the lower bound in (\ref{low}) turns into
${\beta _{low}}(P,\Sigma ,\gamma ) = s_\rho \left( F_\gamma \left[
P , \Sigma \right] \right)$. Furthermore, it is easy to see that
$\alpha_{i,s}=1$ and $\beta_s=1$ for every $i,s=1,2,\dots,k$.
Therefore, the upper bound in (\ref{up}) becomes
\[
 {\beta _{up}}(P,\Sigma ,\gamma ) = \left\| {{\Delta _\gamma }} \right\|_2 =
 {s_\rho }\left( {{F_\gamma }\left[ {P,\Sigma } \right]} \right)\left\|
 {\hat U\left( \gamma  \right)\hat V{{\left( \gamma  \right)}^\dag }} \right\|_2 .
\]
Moreover, the associated perturbed matrix polynomial
$Q_{\gamma}(\lambda)$ in (\ref{Q}) is now written
\begin{equation}\label{q47}
 {Q_\gamma }(\lambda ) = P(\lambda ) + {\Delta _\gamma }(\lambda ) =
 P(\lambda ) + {\Delta_\gamma} = I \lambda - \left( {A + {s_\rho}
 \left( {{F_\gamma}\left[ {P,\Sigma} \right]} \right)
 \hat U \left( \gamma \right)\hat V{{\left( \gamma \right)}^\dag }} \right).
\end{equation}
   }
\end{remark}

\section{Numerical examples} \label{example}

In this section, the validity of the method described in the
previous sections is verified by two numerical examples. The lower
and upper bounds for the distance $D_w(P,\Sigma )$ are computed by
applying the procedures described in Section \ref{bounds}, and by
using the MATLAB function \texttt{fminbnd} which finds a minimum
of a function of one variable within a fixed interval. As it was
mentioned in Remark \ref{rrank}, the condition $\rank ( V(\gamma)
) = k$ appears to be generic when $\gamma>0$. All computations
were performed in MATLAB with $16$ significant digits; however,
for simplicity, all numerical results are shown with $4$ decimal
places.

\begin{example} \label{exam1}  \textup{
Consider the $3\times 3$ matrix polynomial
\[
 P(\lambda ) = \left[ \begin{array}{*{20}{c}}
 7 &  9 & -2 \\
 0 & -2 &  0 \\
 6 & -3 & -1  \end{array} \right]{\lambda ^2}
 + \left[ \begin{array}{*{20}{c}}
 9  & -3 & 3  \\
 -5 & 8  & 10 \\
 4  & -3 & 0 \end{array} \right]\lambda
 + \left[ {\begin{array}{*{20}{c}}
  -5 &  0 &  5  \\
  -2 & -2 & 10  \\
   1 &  9 &  2 \end{array}} \right] ,
\]
whose spectrum is $\sigma(P) = \{ 76.9807 , 0.9284 , 0.3034 ,
-1.0283 , -0.9421 \pm 0.9281\textup{\,i} \}$. Let $w = \{ w_0 ,
w_1 , w_2 \}$ $= \{ 12.0731 , 14.8523 , 11.7991 \}$ be the set of
weights which are the norms of the coefficient matrices, and
suppose that the set of desired eigenvalues is $\Sigma  = \left \{
1 + \textup{i}, - 2 , 3 \right \}$. By applying the MATLAB
function \texttt{fminbnd}, it appears that the function $\beta
_{up} (P,\{1 + \textup{i}, - 2,3\} , {\gamma})$ $\,(\gamma > 0)$
attains its minimum at $\gamma = 1.9656$, that is,
\[
    \beta _{up} (P,\{1 + \textup{i}, - 2,3\} , 1.9656 )
    = 1.0090 ,
\]
and the function $\beta _{low} (P,\{1 + \textup{i}, - 2,3\} ,
{\gamma})$ $\,(\gamma > 0)$ attains its maximum at $\gamma =
5.3634 \cdot 10^{-5}$, that is,
\[
    \beta _{low} (P,\{1 + \textup{i}, - 2,3\} , 5.3634 \cdot 10^{-5} )
    = 0.1320 .
\]
In Figure \ref{fig:ex1}, the graphs of the upper bound
${\beta_{up}}(P,\left\{ {1 + \textup{i}, - 2, 3} \right\},\gamma)$
and the lower bound ${\beta _{low}}(P,\left\{ {1 + \textup{i}, -
2, 3} \right\},\gamma)$ are plotted for $\gamma \in ( 0 , 10 ]$.
 \begin{figure}
 \centering
 \includegraphics[width=0.5\linewidth]{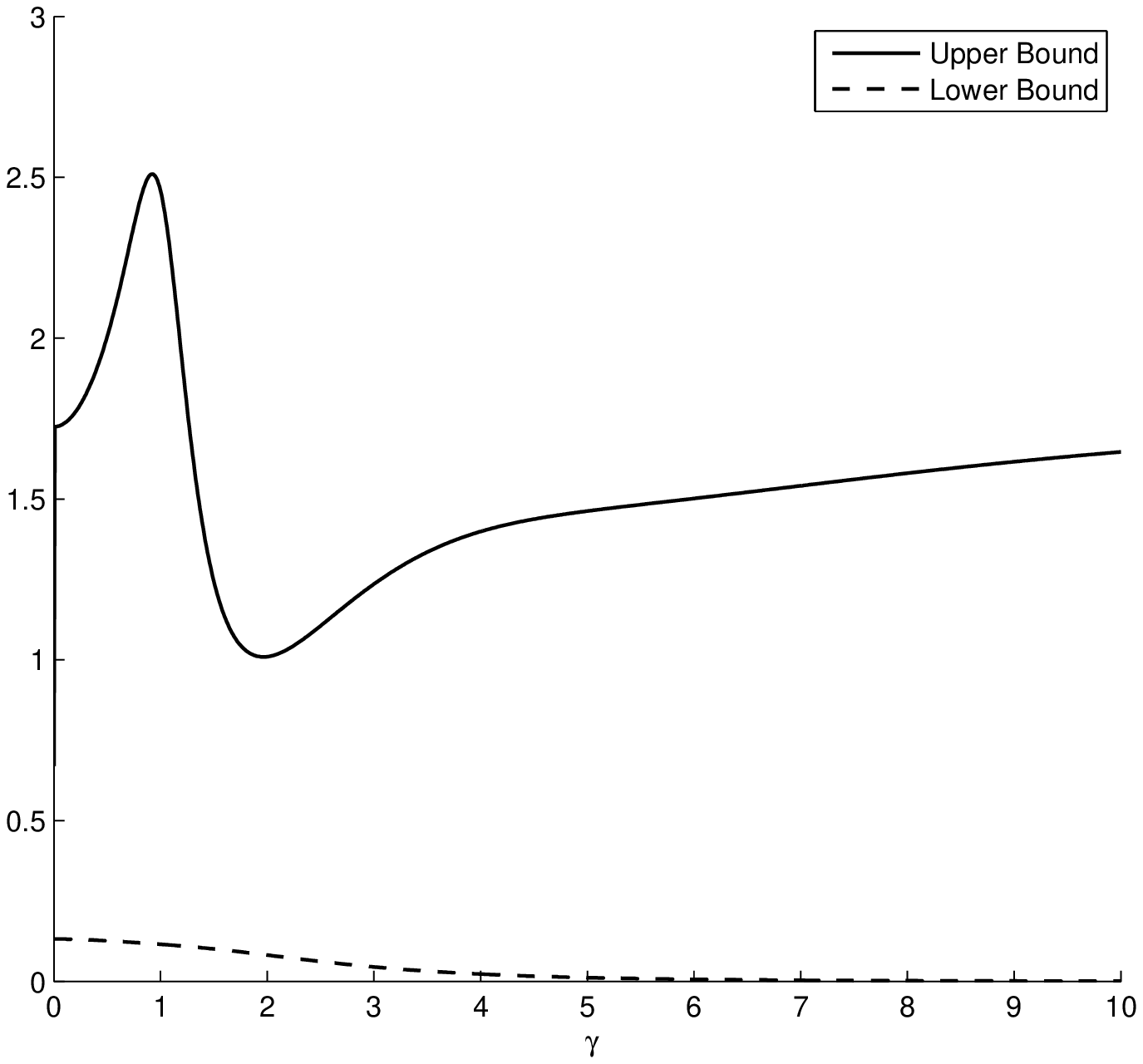}
 \caption{The graphs of ${\beta _{low}}(P,\left\{ {1 + \textup{i}, - 2, 3} \right\},\gamma)$
          and ${\beta _{up}}(P,\left\{ {1 + \textup{i}, - 2, 3} \right\},\gamma)$.}
 \label{fig:ex1}
 \end{figure}
Also, for the perturbation
\begin{eqnarray*}
 \Delta_{1.9656} \left( \lambda  \right) &=&
 \left[  \begin{array}{*{20}c}
   - 1.5506 +  0.5852\textup{\,i} & - 3.6805  -  3.7560\textup{\,i} & 3.2843  -  2.4550\textup{\,i}  \\
   - 1.3951 +  1.1287\textup{\,i} & 0.8130  -  3.6071\textup{\,i}   & 1.4666  +  0.2551\textup{\,i}  \\
   - 4.9524 +  1.3272\textup{\,i} & - 0.1817  -  0.1712\textup{\,i} & - 0.1517  -  2.5523\textup{\,i}
\end{array} \right]\lambda ^2 \\
& & + \left[ \begin{array}{*{20}c}
   - 1.0045 +  0.6941\textup{\,i} & - 3.2991  -  2.0307\textup{\,i} & 1.9114  -  2.3391\textup{\,i}  \\
   - 0.7966 +  1.0550\textup{\,i} & - 0.0602  -  2.7233\textup{\,i} & 1.0938  -  0.0784\textup{\,i}  \\
   - 3.3045 +  1.8295\textup{\,i} & - 0.1603  -  0.0901\textup{\,i} & - 0.5623  -  1.7977\textup{\,i}  \\
\end{array} \right]\lambda  \\
& & + \left[ \begin{array}{*{20}c}
   - 2.1779 -  1.0042\textup{\,i} & 0.1345  -  7.6081\textup{\,i} & 5.8658  +  0.8927\textup{\,i}  \\
   - 2.5802 -  0.2920\textup{\,i} & 4.5439  -  2.8248\textup{\,i} & 1.2263  +  1.7709\textup{\,i}  \\
   - 6.3971 -  3.7574\textup{\,i} & - 0.0080  -  0.3612\textup{\,i} & 2.4770  -  2.7481\textup{\,i}
\end{array} \right]
\end{eqnarray*}
the perturbed matrix polynomial $Q_{1.9656} (\lambda ) =
P(\lambda)+\Delta _{1.9656}(\lambda)$ lies on the boundary of the
set ${\mathcal{B}}(P,{\beta _{up}}(P,\left\{ {1 + \textup{i}, -
2,3} \right\},1.9656),w) = {\mathcal{B}}(P,1.0090,w)$ and has
$\Sigma$ in its spectrum.
  }

  \textup{
Let us now consider the case $\gamma=0$. Then, our
discussion yields the perturbation
\[
 {\Delta _0}(\lambda ) = {\Delta _0} = \left[ {\begin{array}{*{20}{c}}
 {{ {0}}{ {.0673 + 0}}{ {.0158\textup{\,i}}}}&{{ {0}}{ {.0656  -  0}}{ {.0194\textup{\,i}}}}&{{ {0}}{ {.0060  -  0}}{ {.0079\textup{\,i}}}}\\
 {{ {1}}{ {.2669 - 0}}{ {.1878\textup{\,i}}}}&{{ {0}}{ {.0412  +  0}}{ {.2304\textup{\,i}}}}&{{ { - 0}}{ {.6315  +  0}}{ {.0940\textup{\,i}}}}\\
 {{ {0}}{ {.3092 - 0}}{ {.1368\textup{\,i}}}}&{{ { - 0}}{ {.1210  +  0}}{ {.1678\textup{\,i}}}}&{{ { - 0}}{ {.2397  +  0}}{ {.0684\textup{\,i}}}}
 \end{array}} \right] \cdot {10^2}  ,
\]
and the perturbed matrix polynomial $Q_0 (\lambda )=P(\lambda
)+\Delta _0$ lies on the boundary of $\mathcal{B}(P,12.5337,w)$
and has $\Sigma$ in its spectrum.
    }
\end{example}

Our second example illustrates the applicability of Remark
\ref{matrix}.

\begin{example} \textup{
Consider the Frank matrix of order $12$,
\[
 F_{12} =  {\small
 \left[ {\begin{array}{*{20}{c}}
 {12}&{11}&{10}&9&8&7&6&5&4&3&2&1\\
 {11}&{11}&{10}&9&8&7&6&5&4&3&2&1\\
 0&{10}&{10}&9&8&7&6&5&4&3&2&1\\
 0&0&9&9&8&7&6&5&4&3&2&1\\
 0&0&0&8&8&7&6&5&4&3&2&1\\
 0&0&0&0&7&7&6&5&4&3&2&1\\
 0&0&0&0&0&6&6&5&4&3&2&1\\
 0&0&0&0&0&0&5&5&4&3&2&1\\
 0&0&0&0&0&0&0&4&4&3&2&1\\
 0&0&0&0&0&0&0&0&3&3&2&1\\
 0&0&0&0&0&0&0&0&0&2&2&1\\
 0&0&0&0&0&0&0&0&0&0&1&1
 \end{array}} \right]
 } ,
\]
which has some small ill-conditioned eigenvalues. Suppose that the
set of the desired eigenvalues is $\Sigma  = \left\{ 0.1, - 0.1,
0.1 \textup{\,i} , - 0.1 \textup{\,i} \right\}$. The optimal
(spectral norm) distance from $F_{12}$ to the set of matrices that
have $\Sigma$ in their spectrum is $\,6.9 \cdot {10^{-4}}\,$
\cite{lipertk}. We consider the linear matrix polynomial
$P(\lambda) = \lambda I_{12} - F_{12}$, and the weights $w_0 = 1$
and $w_1 = 0$ (i.e., we consider perturbations of the standard
eigenproblem of matrix $F_{12}$). The MATLAB function
\texttt{fminbnd} applied for the difference
\[
   \beta_{up} ( P , \left \{ 0.1, - 0.1, 0.1
   \textup{\,i} , - 0.1 \textup{\,i} \right \} , {\gamma}) -
   \beta_{low} ( P, \left \{ 0.1, - 0.1, 0.1 \textup{\,i} ,
    - 0.1 \textup{\,i} \right\} , \gamma )
\]
yields $\gamma=2.5730$. Then, according to the discussion in
Remark \ref{matrix}, we have
\begin{eqnarray*}
  \beta_{low} \left( P , \Sigma , 2.5730 \right) = 6.4007 \cdot 10^{-4}
  &\le&  6.9 \cdot 10^{-4} =  D_w \left( P , \Sigma \right)  \\
  &\le&  8.6167 \cdot 10^{ - 4} = \beta_{up} \left( P,\Sigma ,{2.5730} \right)  .
\end{eqnarray*}
Also, it is easy to see that the spectrum of the perturbed linear
matrix polynomial $Q_{\gamma}(\lambda)$ in (\ref{q47}) includes
the given set $\Sigma$. In Figure \ref{fig:ex2_N}, the graphs of
the upper bound ${\beta_{up}}(P, \Sigma,\gamma)$ and the lower
bound ${\beta _{low}}(P, \Sigma ,\gamma)$ are plotted for $\gamma
\in ( 0 , 5 ]$.
 \begin{figure}
 \centering
 \includegraphics[width=0.5\linewidth]{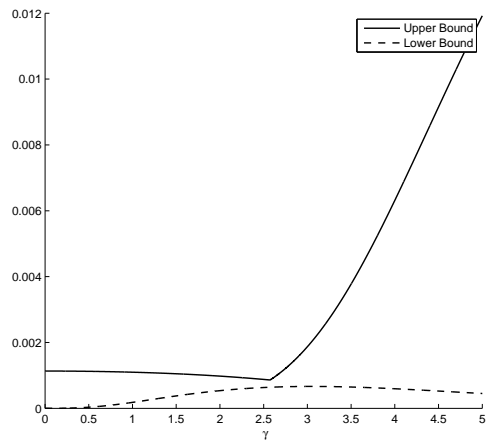}
 \caption{The graphs of ${\beta _{low}}(P,\Sigma,\gamma)$
          and ${\beta _{up}}(P,\Sigma,\gamma)$.}
 \label{fig:ex2_N}
 \end{figure}
   }
\end{example}


\end{document}